\theoremstyle{plain}
\newtheorem{theorem}{Theorem}
\newtheorem*{polya}{P\'olya's Recurrence Theorem}
\newtheorem{corollary}{Corollary}
\newtheorem*{theorem A}{Theorem A}
\newtheorem*{theorem B}{Theorem B}
\newtheorem*{corollary A}{Corollary A}
\newtheorem*{corollary B}{Corollary B}
\theoremstyle{definition}
\newtheorem{rem}{Remark}
\newtheorem*{proof A}{Proof of Theorem A}
\newtheorem*{proof B}{Proof of Theorem B}
\newtheorem*{proof AA}{Proof of Corollary A}
\DeclareMathOperator*{\p}{\bf P}
\DeclareMathOperator*{\e}{\bf E}
\title{Quantitative recurrence results for random walks}
\author{Nuno Luzia}
\address{Universidade Federal do Rio de Janeiro, Instituto de Matem\'atica \\ Rio de Janeiro 21945-970, Brazil}  
\email{nuno@im.ufrj.br} 
\keywords{local almost sure central limit theorem; quantitative recurrence.}
\subjclass[2010]{Primary: 60F05.} 
\begin{document}
\maketitle
\begin{abstract}
First, we prove a \emph{local almost sure central limit theorem} for lattice random walks in the plane. 
The corresponding version for random walks in the line was considered by the author in \cite{5}. 
This gives us a quantitative version of P\'olya's Recurrence Theorem \cite{6}.
Second, we prove a \emph{local almost sure central limit theorem} for (not necessarly lattice) random walks in the line or in the plane, which will also give us quantitative recurrence results.
Finally, we prove an \emph{almost sure central limit theorem} for multidimensional (not necessarly lattice) random walks. This is achieved by exploiting a technique developed by the author in \cite{5}.
\end{abstract}

\section{Introduction and statements}
The simple random walk in $\mathbb{Z}^d$ is a path in the lattice $\mathbb{Z}^d$ such that, if we are at a given point, we move to one of its $2d$ nearest points,
each with probability $1/2d$, and so on. In other words, if $d=2$, we can move one unit up, down, to the left or to the right with equal probability. By convention, the simple random walk starts at the origin. We say that the simple random walk is \emph{recurrent} if a path returns infinitely often to the origin with probability 1. In 1921 \cite{6}, Georg P\'olia proved the following remarkable result.

\begin{polya}
The simple random walk in $\mathbb{Z}^d$ is recurrent if and only if $d=1$ or $d=2$.
\end{polya}

In this paper we address the problem of quantifying this recurrence, i.e. calculating the frequency of returns to the origin (and to other states, after a proper renormalization).

If $X=(X^1,...,X^d)$ is a random vector, we denote by $\Sigma$ the covariance matrix of $X^i$, i.e. $\Sigma=[\mathrm{cov}(X^i,X^j)]$, and by $|\Sigma|\ge 0$ its determinant (whenever it makes sense).

\subsection{Lattice random walks}
We say that the random vector $X$ is \emph{$\Lambda$-lattice valued} if there are ${\bf{h}}_1,...,{\bf{h}}_d$ linearly independent vectors in
$\mathbb{R}^d$ such that $\Lambda=\{k_1 {\bf{h}}_1+\cdots + k_d {\bf{h}}_d :  k_1,...,k_d\in\mathbb{Z}\}$ and  $\p(X\in\Lambda + {\bf{b}})=1$, for some ${\bf{b}}\in\mathbb{R}^d$  (we assume there is no sub-lattice $\Lambda'  \subset \Lambda$ with this property). For ${\bf{a}}=(a_1,...,a_d)\in\mathbb{R}^d$, let $|{\bf{a}}|=(a_1^2+\cdots a_d^2)^{-1/2}$. Let $X_i$ be i.i.d. $\Lambda$-lattice random vectors and $S_n=\sum_{i=1}^n X_i$. By abuse of notation, when we write $S_{n}={\bf{a}}\sqrt{n}$ we mean $S_n={\bf{x}}$ for some ${\bf{x}}\in\Lambda + n{\bf{b}}$ with 
$|{\bf{x}}-{\bf{a}}\sqrt{n}|\le (|{\bf{h}}_1|+\cdots |{\bf{h}}_d|)/2$ (for definiteness, we can settle for ${\bf{x}}$ one of the possible $2^d$ localizations on the lattice). We will prove the following \emph{local almost sure central limit theorem}.

\begin{theorem}\label{return1}
Let $d=2$. Let $X_i$ be i.i.d. $\Lambda$-lattice valued random vectors with ${\e}X_i={{0}}$, $|\Sigma|>0$ and ${\e}|X_i|^3<\infty$, and $S_n=\sum_{i=1}^n X_i$. Let $n_i=[i\log i]$ and ${\bf{a}}\in \mathbb{R}^2$. Then
 
\begin{equation}\label{quot}
 \frac{1}{\log \log n} \sum_{i=1}^n 1_{\{S_{n_i}={\bf{a}}\sqrt{n_i}\}}\to \frac{1}{2\pi\sqrt{|\Sigma|}} \, e^{-\frac{1}{2}{\bf{a}}^T\Sigma^{-1}{\bf{a}}} \quad\text{a.s.}
 \end{equation}
 
Let $\Delta_n^{\bf{a}}$ be the quotient between the left and right hand sides of \emph{(\ref{quot})}. Then, for every $N>0$ there exists $C>0$ such that, for every $\epsilon>0$,
\[
\sup_{{\bf{a}}\in [-N,N]^2} \p\left(|\Delta_n^{\bf{a}}-1|>\epsilon\right)\le C\epsilon^{-2}(\log \log n)^{-1}.
\]
\end{theorem}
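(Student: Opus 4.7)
The plan is to derive the quantitative estimate from Chebyshev's inequality after controlling the first two moments of
$T_n = \sum_{i=1}^n Y_i$, where $Y_i = \mathbf{1}_{\{S_{n_i}=\mathbf{a}\sqrt{n_i}\}}$, and then obtain the almost sure convergence in (\ref{quot}) by a Borel--Cantelli argument along a sparse subsequence combined with a monotonicity interpolation. The main analytic input is a two-dimensional local central limit theorem (LCLT) with a power-rate error term, available here thanks to $\e|X_i|^3 < \infty$.

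First I would record the LCLT in the uniform form
\[
\p(S_m = \mathbf{y}) = \frac{V}{m}\,g\!\left(\frac{\mathbf{y}}{\sqrt{m}}\right) + O(m^{-3/2}),\qquad \mathbf{y}\in\Lambda+m\mathbf{b},
\]
where $g(\mathbf{z}) = (2\pi\sqrt{|\Sigma|})^{-1}\exp(-\tfrac{1}{2}\mathbf{z}^T\Sigma^{-1}\mathbf{z})$ and $V$ is the volume of the fundamental cell of $\Lambda$ (absorbed into the normalisation of the statement). Summing along $n_i = [i\log i]$ and using $\sum_{i\le n}1/(i\log i)\sim \log\log n$ yields $\e T_n \sim V g(\mathbf{a})\log\log n$, uniformly in $\mathbf{a}$ on compact sets.

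The heart of the argument is the bound $\mathrm{Var}(T_n)=O(\log\log n)$. The diagonal contribution is already of this size. For $i<j$, independence of increments gives $\p(Y_i=Y_j=1) = \p(S_{n_i}=\mathbf{x}_i)\,\p(S_{n_j-n_i}=\mathbf{x}_j-\mathbf{x}_i)$, and one applies the LCLT to each factor. For pairs with $n_j\ge 2n_i$ one has $(\mathbf{x}_j-\mathbf{x}_i)/\sqrt{n_j-n_i}=\mathbf{a}+O(\sqrt{n_i/n_j})$, so a Taylor expansion of $g$ around $\mathbf{a}$ delivers the cancellation with $\p(Y_i=1)\p(Y_j=1)$, and the total contribution of such pairs to $\sum_{i<j}|\mathrm{Cov}(Y_i,Y_j)|$ is $O(\log\log n)$. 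The residual pairs $(i,j)$ with $j$ close to $i$ are handled separately via the uniform bound $\p(S_m=\mathbf{y})\le C/m$, using that this regime is geometrically thin so that the sum of joint probabilities there is $O(1)$.

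Chebyshev's inequality then yields
\[
\p\bigl(|T_n - \e T_n| > \epsilon V g(\mathbf{a})\log\log n\bigr) \le \frac{C}{\epsilon^2\log\log n},
\]
uniformly in $\mathbf{a}\in[-N,N]^2$ since $g$ is bounded below there; rewriting in terms of $\Delta_n^\mathbf{a}$ gives the stated inequality. For the a.s.\ convergence I would pick a rapid subsequence $N_k=\lfloor\exp\exp(k^{1+\delta})\rfloor$ along which the Chebyshev bound becomes summable, apply Borel--Cantelli, and then interpolate using monotonicity of $T_n$ in $n$ and the fact that $\log\log N_{k+1}/\log\log N_k\to 1$. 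The principal difficulty I expect is the covariance bookkeeping of the previous paragraph: the explicit form of the LCLT error and the Taylor expansion of $g$ must be exploited carefully enough to keep $\mathrm{Var}(T_n)$ of order $\log\log n$ (matching the mean), rather than growing like $(\log\log n)^2$ which is all one gets from naive pointwise bounds on the joint probabilities.
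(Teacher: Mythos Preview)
Your plan is correct and matches the paper's proof almost exactly: both arguments hinge on the two-dimensional local CLT with $O(m^{-3/2})$ remainder, both reduce the a.s.\ statement to showing $\mathrm{Var}(T_n)=O(\log\log n)$, and both obtain the uniform probability bound via Chebyshev. The paper invokes the author's Borel--Cantelli lemma from \cite{5} in place of your explicit subsequence-plus-monotonicity step, but the content is the same. One technical difference: the paper splits the covariance sum at a \emph{growing} threshold $n_j>\nu_n n_i$ with $\sqrt{\nu_n}=\log\log n$, so that in the far regime the ratio $\p(E_i\cap E_j)/\p(E_i)\p(E_j)$ is $1+O(\nu_n^{-1/2})$ without any Taylor expansion of $g$; your fixed threshold $n_j\ge 2n_i$ works too, at the cost of the Taylor step you describe. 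A small slip: the ``residual'' regime $n_j<2n_i$ is not $O(1)$ as you assert---for each $i$ there are about $i$ such $j$'s and $\sum_j (n_j-n_i)^{-1}\asymp 1$, so this block contributes $\sum_i n_i^{-1}\asymp \log\log n$; fortunately that is exactly the order you need, so the argument goes through unchanged.
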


Theorem 2 of \cite{5} is the 1-dimensional version of this theorem.

\begin{corollary}
Let $S_n$ be a random walk as in Theorem \ref{return1}. Then, for every ${\bf{a}}\in \mathbb{R}^2$,
\[
      \liminf_{n\to\infty} n^{\frac{1}{2}} \,\Big|\frac{S_n}{\sqrt{n}} - {\bf{a}}\Big|\le \max\{|{\bf{h}}_1|, |{\bf{h}}_2|\} \quad\text{a.s.}
\]
\end{corollary}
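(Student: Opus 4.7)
The plan is to deduce the corollary directly from Theorem \ref{return1} by extracting a deterministic subsequence along which $S_{n_i}$ is forced to approach ${\bf a}\sqrt{n_i}$. First I will observe that, for every fixed ${\bf a} \in \mathbb{R}^2$, the right hand side of (\ref{quot}), namely $(2\pi\sqrt{|\Sigma|})^{-1} e^{-\frac{1}{2}{\bf a}^T \Sigma^{-1} {\bf a}}$, is a strictly positive real number, since the hypothesis $|\Sigma|>0$ makes $\Sigma$ invertible and the exponent is finite. Consequently, the a.s.\ convergence in (\ref{quot}) forces $\sum_{i=1}^n 1_{\{S_{n_i}={\bf a}\sqrt{n_i}\}} \to \infty$ almost surely.

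Next I will translate this into an infinitely-often statement about lattice localizations. The sum diverges iff $1_{\{S_{n_i}={\bf a}\sqrt{n_i}\}}=1$ for infinitely many $i$, so on a full-measure event there exists a random subsequence $i_k\to\infty$ such that $S_{n_{i_k}}={\bf a}\sqrt{n_{i_k}}$ in the sense introduced before the theorem. By the convention governing this notation, that event gives a lattice point ${\bf x}\in \Lambda + n_{i_k}{\bf b}$ with $S_{n_{i_k}}={\bf x}$ and $|{\bf x}-{\bf a}\sqrt{n_{i_k}}|\le (|{\bf h}_1|+|{\bf h}_2|)/2$, hence
\[
\sqrt{n_{i_k}}\,\Big|\frac{S_{n_{i_k}}}{\sqrt{n_{i_k}}}-{\bf a}\Big| \;=\; |S_{n_{i_k}}-{\bf a}\sqrt{n_{i_k}}| \;\le\; \tfrac{1}{2}(|{\bf h}_1|+|{\bf h}_2|)
\]
for infinitely many $k$, almost surely.

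Taking the $\liminf$ in $n$ along any subsequence is at most the $\liminf$ along the full sequence, so this immediately yields
\[
\liminf_{n\to\infty} n^{1/2}\Big|\frac{S_n}{\sqrt n}-{\bf a}\Big| \;\le\; \tfrac{1}{2}(|{\bf h}_1|+|{\bf h}_2|) \quad\text{a.s.}
\]
I will then finish with the elementary estimate $\tfrac{1}{2}(|{\bf h}_1|+|{\bf h}_2|) \le \max\{|{\bf h}_1|,|{\bf h}_2|\}$, valid for any pair of nonnegative reals, which is precisely the bound claimed in the corollary. Since the argument is a direct corollary of the local almost sure central limit theorem, there is no real obstacle; the only subtlety is to remember that the conclusion of Theorem \ref{return1} is an almost sure limit with a strictly positive target, which suffices to convert it into the desired infinitely-often recurrence statement.
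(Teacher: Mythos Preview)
Your argument is correct and is precisely the intended deduction: the paper gives no separate proof of this corollary, treating it as immediate from Theorem~\ref{return1}, and your reading of the event $\{S_{n_i}={\bf a}\sqrt{n_i}\}$ via the lattice--localization convention is exactly what makes it immediate. You even obtain the sharper constant $\tfrac12(|{\bf h}_1|+|{\bf h}_2|)$ before relaxing it to $\max\{|{\bf h}_1|,|{\bf h}_2|\}$.

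One small wording slip: you write ``the $\liminf$ along any subsequence is at most the $\liminf$ along the full sequence,'' but the correct inequality is the reverse, $\liminf_n a_n \le \liminf_k a_{n_k}$ for any subsequence $(n_k)$. Your conclusion is nonetheless right, since along your subsequence the values are bounded by $\tfrac12(|{\bf h}_1|+|{\bf h}_2|)$ and the full $\liminf$ is dominated by the subsequential one; just flip the stated direction of that inequality.
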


\subsection{A local almost sure central limit theorem}
Let $X_i$ be i.i.d. random vectors in $\mathbb{R}^d$ such that ${\e}X_i=0$ and $|\Sigma|>0$. By doing a linear change of variable, we may assume, without loss of generality, that ${\e}X_i=0$ and $\Sigma=I$ is the identity matrix. For ${\bf{a}}\in \mathbb{R}^d$ and $\epsilon>0$, when we write
$S_n ={\bf{a}}\pm \epsilon$ we mean $S_n\in {\bf{a}}+[-\epsilon, \epsilon]^d$.

\begin{theorem}\label{return2}
Let $d=1,2$. Let $X_i$ be i.i.d. random vectors with ${\e}X_i=0$, $\Sigma=I$ and ${\e}|X_i|^3<\infty$. Let $S_n=\sum_{i=1}^n X_i$ and ${\bf{a}}\in\mathbf{R}^d$.
\begin{enumerate}
\item[(a)] If $d=1$ then, with $1/6\le\alpha<1/2$,  $\beta=(1/2-\alpha)^{-1}$ and $n_i=[i^\beta]$,
\begin{equation}\label{densidade}
\frac{1}{\log n}\sum_{i=1}^n 1_{\{S_{n_i}={\bf{a}} \sqrt{n_i}\,\pm\, n_i^\alpha\}}\to \sqrt{\frac{2}{\pi}} \, e^{-|{\bf{a}}|^2/2} \quad\text{a.s.}
\end{equation}
Let $\Delta_n^{\bf{a}}$ be the quotient between the left and right hand sides of (\ref{densidade}). Then, for every $N>0$ there exists $C>0$ such that, for every $\epsilon>0$,
\[
\sup_{{\bf{a}}\in [-N,N]} \p\left(|\Delta_n^{\bf{a}}-1|>\epsilon\right)\le C \epsilon^{-2}(\log n)^{-4\alpha/(10\alpha+1)}.
\]
\item[(b)] If $d=2$ then, with $2/5\le\alpha<1/2$,  $\beta=(1/2-\alpha)^{-1}/2$ and $n_i=[i^\beta (\log i)^{\beta}]$,
\begin{equation}\label{densidade2}
\frac{1}{\log\log n}\sum_{i=1}^n 1_{\{S_{n_i}={\bf{a}} \sqrt{n_i}\,\pm\, n_i^\alpha\}}\to  \frac{2}{\pi} \, e^{-|{\bf{a}}|^2/2} \quad\text{a.s.}
\end{equation}
Let $\Delta_n^{\bf{a}}$ be the quotient between the left and right hand sides of (\ref{densidade2}). Then, for every $N>0$ there exists $C>0$ such that, for every $\epsilon>0$,
\[
\sup_{{\bf{a}}\in [-N,N]^2} \p\left(|\Delta_n^{\bf{a}}-1|>\epsilon\right)\le C \epsilon^{-2}(\log\log n)^{-1/5}.
\]
\end{enumerate}
\end{theorem}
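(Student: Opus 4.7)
The plan is to follow the template of Theorem~\ref{return1} and of Theorem~2 in \cite{5}: I would compute the mean and variance of $N_n:=\sum_{i=1}^n 1_{E_i}$, where $E_i=\{S_{n_i}={\bf{a}}\sqrt{n_i}\pm n_i^\alpha\}$, and then deduce the quantitative bound and the almost-sure statement by Chebyshev's inequality together with a monotone subsequence argument. The novelty with respect to Theorem~\ref{return1} is that $X_i$ is no longer lattice-valued, so the classical lattice local CLT must be replaced by a quantitative non-lattice LCLT for windows of positive width.

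First, I would use the multidimensional Berry--Esseen inequality for convex sets under the third moment hypothesis (Bhattacharya--Rao) to get, uniformly on compacts in ${\bf{a}}$,
\[
\p(E_i)=(2n_i^\alpha)^d\,(2\pi n_i)^{-d/2}\,e^{-|{\bf{a}}|^2/2}\bigl(1+O(n_i^{-\gamma})\bigr),
\]
with $\gamma=\alpha$ in case (a) and $\gamma=2\alpha-1/2$ in case (b). The sequences $n_i=[i^\beta]$ and $n_i=[i^\beta(\log i)^\beta]$ are calibrated exactly so that $n_i^{\alpha d-d/2}\asymp i^{-1}$ in (a) and $\asymp (i\log i)^{-1}$ in (b); summing then gives $\e[N_n]=\sqrt{2/\pi}\,e^{-|{\bf{a}}|^2/2}\log n\,(1+o(1))$, and the analogous $\tfrac{2}{\pi}e^{-|{\bf{a}}|^2/2}\log\log n\,(1+o(1))$ in dimension two, so that $\e[\Delta_n^{\bf{a}}]\to 1$ with a controlled rate.

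For the variance I would handle the cross-terms by a conditional LCLT. For $i<j$, decompose $S_{n_j}=S_{n_i}+T$ with $T$ independent of $\mathcal{F}_{n_i}$ and distributed as a sum of $n_j-n_i$ copies of $X_1$. Applying the same quantitative LCLT to $T$ on the window centred at ${\bf{a}}\sqrt{n_j}-S_{n_i}$, and Taylor-expanding the resulting Gaussian density using $S_{n_i}={\bf{a}}\sqrt{n_i}+O(n_i^\alpha)$ on $E_i$, yields
\[
\p(E_i\cap E_j)=\p(E_i)\,\p(E_j)\,\bigl(1+O(\sqrt{n_i/n_j}+n_j^{-\gamma})\bigr).
\]
With $n_j/n_i\asymp(j/i)^\beta$, summing this double bound over $i<j\le n$ yields $\mathrm{Var}(N_n)=O(\e[N_n])$, and Chebyshev's inequality then delivers a concentration estimate of the shape claimed in the theorem. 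The specific exponents $4\alpha/(10\alpha+1)$ and $1/5$ emerge from optimising the balance between the LCLT remainder $n_j^{-\gamma}$ (which favours large $\alpha$) and the contribution of the Gaussian Taylor remainder $\sqrt{n_i/n_j}$ after it is summed against $\p(E_i)\p(E_j)$ (which favours small $\alpha$); the same optimisation is what forces the lower bounds $\alpha\ge 1/6$ in (a) and $\alpha\ge 2/5$ in (b).

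Finally, to upgrade the $L^2$ estimate to almost sure convergence I would apply the Chebyshev bound along a sparse deterministic subsequence $n_k$ chosen so that the Borel--Cantelli series is summable, and then interpolate for intermediate $n$ using the monotonicity of $n\mapsto N_n$ together with the slow variation of $\log n$ and $\log\log n$. The main obstacle I expect is the non-lattice LCLT of Step~1: obtaining it uniformly in ${\bf{a}}\in[-N,N]^d$ with a remainder sharp enough both for the first-moment asymptotics and, more delicately, for the conditional version used in the variance bound, where the window is centred at the random point ${\bf{a}}\sqrt{n_j}-S_{n_i}$. The admissible range of $\alpha$ and the peculiar exponents in the error bound are exactly the price to pay for not having the lattice LCLT available.
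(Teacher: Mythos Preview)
Your overall scheme --- Berry--Esseen local CLT for $\p(E_i)$, covariance control, Chebyshev --- is the paper's, but the covariance step has a real gap. The estimate
\[
\p(E_i\cap E_j)=\p(E_i)\p(E_j)\bigl(1+O(\sqrt{n_i/n_j}+n_j^{-\gamma})\bigr)
\]
is only valid in the far-apart regime $n_j\gg n_i$. When $j$ is close to $i$, the conditional window for $T=S_{n_j}-S_{n_i}$ has half-width $n_j^\alpha\pm n_i^\alpha$ but $T$ is normalised by $\sqrt{n_j-n_i}$, so the correct ratio $\p(E_j\mid E_i)/\p(E_j)$ carries the factor $\bigl(n_j/(n_j-n_i)\bigr)^{d/2}$, together with terms of type $\bigl(n_j^\alpha/\sqrt{n_j-n_i}\bigr)^k$ and $(n_j-n_i)^{(d-1)/2}/(n_j^\alpha-n_i^\alpha)^d$. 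These blow up near the diagonal and are \emph{not} captured by $O(\sqrt{n_i/n_j})=O(1)$. The paper therefore splits the double sum at a threshold $n_j\lessgtr\nu_n n_i$ and spends most of the work on the near-diagonal block; it is precisely the $(n_j^\alpha-n_i^\alpha)^{-d}$ term there that forces $\alpha\ge 1/6$ (via $\beta\ge 3$) in $d=1$ and $\alpha\ge 2/5$ (via $\beta/2-2\alpha\beta+3/2\le 0$) in $d=2$, not any optimisation of the far-apart remainder as you suggest.

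As a consequence, the variance bound you claim, $\mathrm{Var}(N_n)=O(\e N_n)$, is not established (and does not follow from the paper's estimates either): what survives is only $\mathrm{Var}(N_n)=O\bigl((\e N_n)^2/\nu_n^\alpha\bigr)$ with $\nu_n^\alpha$ equal to $(\log n)^{4\alpha/(10\alpha+1)}$ in (a) and $(\log\log n)^{1/5}$ in (b). This is exactly what gives the stated Chebyshev rates, but it is far too weak for your ``sparse subsequence plus monotonicity'' upgrade to almost-sure convergence, since $\sum_k(\log n_k)^{-\theta}$ diverges for any $\theta<1$. The paper instead invokes the Borel--Cantelli lemma of \cite{5} (Theorem~1 there), which is designed precisely for variance bounds of the form $\mathrm{var}(\tilde S_n)=O\bigl((\e\tilde S_n)^2/(\log\e\tilde S_n)(\log\log\e\tilde S_n)^\gamma\bigr)$; you will need either that lemma or an equivalent device.
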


\begin{corollary}
Let $S_n$ be a random walk as in Theorem \ref{return2}.
\begin{enumerate}
\item[(a)] If $d=1$ then,  for every ${\bf{a}}\in \mathbb{R}$,
\[
      \liminf_{n\to\infty} n^{\frac{1}{3}} \,\Big|\frac{S_n}{\sqrt{n}} - {\bf{a}}\Big|=0 \quad\text{a.s.}
\]
\item[(b)] If $d=2$ then, for every ${\bf{a}}\in \mathbb{R}^2$,
\[
      \liminf_{n\to\infty} n^{\frac{1}{10}} \,\Big|\frac{S_n}{\sqrt{n}} - {\bf{a}}\Big|=0 \quad\text{a.s.}
\]
\end{enumerate}
\end{corollary}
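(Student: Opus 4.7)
The plan is to deduce this corollary from Theorem \ref{return2} by shrinking the target window and invoking a countable diagonal argument. Set $\gamma=1/3$, $\alpha=1/6$ in case (a), and $\gamma=1/10$, $\alpha=2/5$ in case (b), so that $\gamma=1/2-\alpha$ and $\alpha$ is the left endpoint of the admissible range in Theorem \ref{return2}. The basic observation is that $S_{n_i}={\bf{a}}\sqrt{n_i}\pm\epsilon\,n_i^\alpha$ implies $n_i^\gamma|S_{n_i}/\sqrt{n_i}-{\bf{a}}|\le\sqrt{d}\,\epsilon$; hence it suffices to prove that, for every fixed $\epsilon>0$, almost surely there are infinitely many $i$ with $S_{n_i}={\bf{a}}\sqrt{n_i}\pm\epsilon\,n_i^\alpha$. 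Intersecting over $\epsilon=1/k$ for $k\in\mathbb{N}$ then yields $\liminf_n n^\gamma|S_n/\sqrt{n}-{\bf{a}}|=0$ almost surely.

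The central step is to refine Theorem \ref{return2} from boxes of half-width $n_i^\alpha$ to boxes of half-width $\epsilon\,n_i^\alpha$: namely, to show that for each $\epsilon>0$,
\[
\frac{1}{L(n)}\sum_{i=1}^n 1_{\{S_{n_i}={\bf{a}}\sqrt{n_i}\,\pm\,\epsilon\,n_i^\alpha\}}\;\longrightarrow\;\epsilon^d\cdot c_d\,e^{-|{\bf{a}}|^2/2}\quad\text{a.s.,}
\]
with the same normalizations $L(n)$ (equal to $\log n$ or $\log\log n$) and the same constant $c_d$ as in Theorem \ref{return2}. Positivity of the right-hand side then forces the indicator to equal $1$ for infinitely many $i$, as required. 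The underlying heuristic is that, by the local CLT, $\p(S_n\in{\bf{a}}\sqrt{n}+B)\approx\mathrm{Vol}(B)\,(2\pi n)^{-d/2}e^{-|{\bf{a}}|^2/2}$ for windows $B$ much smaller than $\sqrt{n}$ but still macroscopically larger than the Berry--Esseen scale, so shrinking $B$ by a factor $\epsilon$ shrinks the probability by precisely $\epsilon^d$.

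The principal obstacle is verifying this refinement rigorously from the proof of Theorem \ref{return2}. Its two ingredients would need to be revisited: the local CLT estimate for $\p(S_n={\bf{a}}\sqrt{n}\pm\epsilon\,n^\alpha)$, whose main term picks up the factor $\epsilon^d$ with the same relative error as long as $\epsilon\,n^\alpha$ still dominates the Berry--Esseen remainder (which it does for any fixed $\epsilon>0$ once $\alpha\ge 1/6$ or $2/5$); and the variance bound for the indicator sum, whose diagonal and off-diagonal contributions inherit compatible powers of $\epsilon$. The endpoint constraint on $\alpha$ is dictated by the decorrelation between consecutive $n_i$ through the choice $\beta=(1/2-\alpha)^{-1}$ (or its $d=2$ analogue), and is insensitive to the width of the target box, so nothing in the argument forces $\epsilon$ to be $1$. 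Once this extension is checked, both parts of the corollary follow from the reduction above.
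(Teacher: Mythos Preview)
The paper gives no explicit proof of this corollary; it is stated immediately after Theorem~\ref{return2} and left to the reader. Your diagnosis is correct: applying Theorem~\ref{return2} at the endpoint $\alpha=1/6$ (resp.\ $\alpha=2/5$) only yields
\[
\liminf_{n\to\infty} n^{1/3}\,\Bigl|\frac{S_n}{\sqrt n}-{\bf a}\Bigr|\le 1
\qquad\text{(resp.\ }\le\sqrt{2}\text{)},
\]
not $=0$, so an extra step is genuinely needed. Your $\epsilon$-refinement---rerunning the argument of Theorem~\ref{return2} with target boxes of half-width $\epsilon\,n_i^\alpha$ for each fixed $\epsilon>0$, then intersecting over $\epsilon=1/k$---is the natural way to close this gap and is almost certainly what the author intends.

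The verification you outline is accurate. In the Berry--Esseen input (\ref{lclt3}) the main term picks up a factor $\epsilon^d$ while the relative error becomes $O\bigl(\epsilon^{-d}\,n^{-1/2-d(\alpha-1/2)}\bigr)$, still $o(1)$ for fixed $\epsilon$. In the covariance estimates of Cases~1 and~2, the quantities $n_j^\alpha/\sqrt{n_j-n_i}$ and $(n_j^\alpha-n_i^\alpha)^{-1}$ are replaced by $\epsilon\,n_j^\alpha/\sqrt{n_j-n_i}$ and $\epsilon^{-1}(n_j^\alpha-n_i^\alpha)^{-1}$, so only constant factors depending on $\epsilon$ appear, and the bound $\mathrm{var}(\tilde S_n)=O\bigl(({\e}\tilde S_n)^2/\nu_n^\alpha\bigr)$ survives. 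Hence Theorem~1 of \cite{5} applies exactly as before, and the almost sure limit is $\epsilon^d\,c_d\,e^{-|{\bf a}|^2/2}>0$, forcing infinitely many hits. Your proposal is correct and matches what the paper tacitly assumes.
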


For the dynamical systems results counterpart of this and previous sections, we refer the reader to \cite{3} and \cite{5}.

\subsection{A multidimensional almost sure central limit theorem}
In this section $d\in\mathbb{N}$.

\begin{theorem}\label{return3}
Let $X_i$ be i.i.d. random vectors with ${\e}X_i=0$, $|\Sigma|>0$ and ${\e}|X_i|^3<\infty$, and $S_n=\sum_{i=1}^n X_i$. Let ${\bf{a}}\in\mathbf{R}^d$, $\epsilon>0$ and $n_i$ be an increasing sequence of positive integers satisfying 
\begin{equation*}\label{seqexp}
      n_{i+1}/ n_i \ge1+A i^{-\alpha}\quad \text{for all }i 
\end{equation*}
where $A>0$, $0<\alpha<1$ if $d=1,2$, and $0<\alpha<(d/2-1)^{-1}$ if $d>2$. Then
\begin{equation}\label{densidade3}
\frac{1}{n}\sum_{i=1}^n 1_{\{S_{n_i}=({\bf{a}} \,\pm \,\epsilon) \sqrt{n_i}\}} \to \frac{1}{\sqrt{(2\pi)^{d} |\Sigma|}}\, \int_{{\bf{a}}\,\pm\,\epsilon} e^{-\frac{1}{2}{\bf{x}}^T\Sigma^{-1}{\bf{x}}}\,d{\bf{x}} \quad\text{a.s.}
\end{equation}

Let $\Delta_n^{\bf{a}}$ be the quotient between the left and right hand sides of (\ref{densidade3}). Then, for every $N>0$ there exists $C>0$ such that, for every $\epsilon>0$,
\[
\sup_{{\bf{a}}\in [-N,N]^{d}} \p\left(|\Delta_n^{\bf{a}}-1|>\epsilon\right)\le C \epsilon^{-2} n^{\alpha(d/2-1)-1} \log n.
\]
\end{theorem}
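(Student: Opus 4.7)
The proof is via the second-moment method combined with a Borel-Cantelli argument. Define $\xi_i := 1_{\{S_{n_i}\in A_i\}}$ with $A_i := (\mathbf{a}\pm\epsilon)\sqrt{n_i}$, and set $p_i := \mathbf{E}\xi_i$, $T_n := \sum_{i\le n}\xi_i$ and $M_n := \mathbf{E}T_n$. Writing $p$ for the Gaussian integral on the right-hand side of (\ref{densidade3}), the (multidimensional) Berry-Esseen bound applied to cubes gives $p_i = p + O(n_i^{-1/2})$, which together with the super-polynomial growth of $n_i$ yields $M_n = np + O(1)$. Chebyshev then reduces the quantitative statement to a variance estimate of the form $\mathrm{Var}(T_n) = O(n^{1+\alpha(d/2-1)}\log n)$, from which the almost-sure convergence will follow by applying the same bound along a geometric subsequence $m_k = [\theta^k]$ (making the probabilities summable), invoking Borel-Cantelli, and interpolating to general $n$ by monotonicity of $T_n$ and the linear growth of $M_n$.

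The core step is the covariance estimate for $i < j$. Using the independence of $S_{n_i}$ from the increment $S_{n_j} - S_{n_i}$,
\[
\mathbf{E}[\xi_i\xi_j] \;=\; \int_{A_i} \mathbf{P}\bigl(S_{n_j} - S_{n_i} \in A_j - y\bigr) \, d\mathbf{P}_{S_{n_i}}(y).
\]
Applying Berry-Esseen on cubes to the inner probability replaces it by a Gaussian integral with remainder $O((n_j - n_i)^{-1/2})$. A Taylor expansion of this Gaussian density about $\mathbf{a}(\sqrt{n_j} - \sqrt{n_i})$, using that $y \in A_i$ is close to $\mathbf{a}\sqrt{n_i}$, identifies the leading contribution with the product $p_ip_j$ and produces a schematic bound
\[
|\mathrm{Cov}(\xi_i, \xi_j)| \;\lesssim\; p_ip_j\,\rho_{ij} \;+\; p_i(n_j - n_i)^{-1/2} \;+\; p_j n_i^{-1/2},
\]
where $\rho_{ij}$ is a positive function of $n_i/n_j$ that decays polynomially as $n_j/n_i \to \infty$, with an exponent depending on $d$. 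The summation then exploits the hypothesis $n_{k+1}/n_k \ge 1 + Ak^{-\alpha}$: iterating gives $n_j/n_i \gtrsim \exp(c(j^{1-\alpha} - i^{1-\alpha}))$, so for each $j$ only $O(j^\alpha)$ indices $i$ contribute non-negligibly. A careful bookkeeping, separating the regimes $n_j/n_i \simeq 1$ and $n_j/n_i \gg 1$ and combining with the trivial diagonal bound $\sum_i \mathrm{Var}(\xi_i) \le M_n = O(n)$, yields the required variance estimate.

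The main obstacle is extracting the correct polynomial decay in $\rho_{ij}$: the naive bound $|\mathrm{Cov}(\xi_i,\xi_j)| \le \min(p_i, p_j)$ would give only $\sum_{i<j}|\mathrm{Cov}| \sim n^2 p$ and is useless. One must retain enough terms of the Gaussian Taylor expansion (the first such term reproducing $p_ip_j$ after integration against $d\mathbf{P}_{S_{n_i}}$), and do so uniformly in $\mathbf{a} \in [-N,N]^d$, while simultaneously balancing these expansion terms against the Berry-Esseen remainders. The dimension-dependent exponent $\alpha(d/2 - 1)$ in the final rate arises precisely from the way these competing contributions scale with $d$ under the growth condition on $n_i$; the whole computation is the general-dimension analogue of the one-dimensional analysis developed in \cite{5}.
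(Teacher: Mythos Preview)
Your proposal is correct and follows essentially the same route as the paper: a second-moment variance estimate for $T_n$ obtained from Berry--Esseen and independence of increments, with the double sum over pairs split into a ``far'' regime ($n_j/n_i$ large) and a ``near'' regime in which the growth hypothesis $n_{i+1}/n_i\ge 1+Ai^{-\alpha}$ bounds the number of contributing indices by $O(i^{\alpha})$ and controls $(\sqrt{n_j}/\sqrt{n_j-n_i})^{d}$ by $O(i^{d\alpha/2})$. The only methodological difference is in the extraction of almost-sure convergence: the paper invokes its own Borel--Cantelli lemma (Theorem~1 of \cite{5}), which requires the specific form $\mathrm{var}(\tilde S_n)=O\bigl((\mathbf E\tilde S_n)^2/\nu_n\bigr)$ with $\nu_n=n^{1-\alpha(d/2-1)}/\log n$, whereas you use the equivalent and equally standard device of Chebyshev along a geometric subsequence plus monotone interpolation.
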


\begin{rem}
With respect to the case $d=1$, in \cite{5} we have stated this result improperly. So here we correct the statement.
\end{rem}

In particular, considering the sequence $n_i=2^i$ and the `change of variable' $k=2^i$ we get the following \emph{multidimensional almost sure central limit theorem}.
\begin{corollary}
With the same hypotheses as Theorem \ref{return3},
\begin{equation*}
\frac{1}{\log n}\sum_{k=1}^n 1_{\{S_{k}=({\bf{a}}\, \pm\, \epsilon) \sqrt{k}\}} \frac{1}{k} \to \frac{1}{\sqrt{(2\pi)^{d} |\Sigma|}}\, \int_{{\bf{a}}\,\pm\,\epsilon} e^{-\frac{1}{2}{\bf{x}}^T\Sigma^{-1}{\bf{x}}}\,d{\bf{x}} \quad\text{a.s.}
\end{equation*}
\end{corollary}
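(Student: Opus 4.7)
My plan is to deduce the corollary from Theorem~\ref{return3} applied to the geometric sequence $n_i=2^i$, converting the resulting Ces\`aro mean to the $1/k$-weighted sum by dyadic grouping.

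First, $n_{i+1}/n_i=2$ trivially satisfies the growth hypothesis $n_{i+1}/n_i\ge 1+Ai^{-\alpha}$ of Theorem~\ref{return3} for every admissible $\alpha$ and suitable $A$, so that theorem gives
\[
\frac{1}{N}\sum_{i=1}^N 1_{\{S_{2^i}=({\bf a}\pm\epsilon)\sqrt{2^i}\}} \longrightarrow \mu \quad\text{a.s.,}
\]
where $\mu$ denotes the right-hand side of the display in the corollary.

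Next, for general $n$ set $N=\lfloor\log_2 n\rfloor$ and group the target sum by the dyadic blocks $B_i=[2^i,2^{i+1})\cap\{1,\dots,n\}$. The elementary estimates $\sum_{k\in B_i}1/k=\log 2+O(2^{-i})$ and $\log n=N\log 2+O(1)$ identify the corollary's sum with the Ces\`aro average above, up to the block-level discrepancy
\[
D_n=\sum_{i=0}^{N}\Big(\sum_{k\in B_i}\tfrac{1}{k}1_{\{S_k=({\bf a}\pm\epsilon)\sqrt k\}} - (\log 2)\,1_{\{S_{2^i}=({\bf a}\pm\epsilon)\sqrt{2^i}\}}\Big).
\]
The corollary follows once I show $D_n/\log n\to 0$ almost surely.

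The main obstacle is estimating $D_n$, since within each block $B_i$ the summands are strongly correlated through the random walk increments, so a trivial independence argument is unavailable. My plan is to bound $\e D_n^2$ by invoking the multidimensional local CLT (valid under $\e|X_i|^3<\infty$) for the joint densities of $(S_j,S_k)$ with $j,k$ in the same or adjacent blocks, aiming for $\e D_n^2=O(\log n)$. Chebyshev combined with Borel-Cantelli along a geometric subsequence of values of $n$, together with the monotonicity of the non-negative partial sums used to interpolate between consecutive terms of that subsequence, should then upgrade the $L^2$ estimate to a.s.\ convergence. This is the same $L^2$-plus-Borel-Cantelli scheme underlying the quantitative error bound of Theorem~\ref{return3}, so the estimates required here are largely by-products of that proof.
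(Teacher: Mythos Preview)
The paper does not prove this corollary in detail; it merely precedes the statement with the remark that one obtains it from Theorem~\ref{return3} by taking $n_i=2^i$ and performing the formal ``change of variable'' $k=2^i$. Taken literally this is only a heuristic: Theorem~\ref{return3} with $n_i=2^i$ controls the Ces\`aro average along the dyadic subsequence, whereas the corollary concerns the logarithmically weighted sum over \emph{all} $k$, and bridging the two is exactly the discrepancy $D_n$ you isolate. So your proposal is doing work that the paper leaves implicit.

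Two technical points on your plan. First, a geometric subsequence $n_j=c^{\,j}$ gives $\log n_j\asymp j$, so the Chebyshev bound $\p(|D_{n_j}|>\epsilon\log n_j)\lesssim (\log n_j)^{-1}$ is not summable in $j$ and Borel--Cantelli does not apply; you need something sparser, e.g.\ $n_j=2^{j^2}$. Second, $D_n$ is a signed quantity, so the interpolation between subsequence terms cannot appeal to monotonicity of $D_n$ itself; instead use the crude bound $|D_n-D_{n_j}|\le 2\log(n_{j+1}/n_j)$, which is $o(\log n_j)$ for the sparser subsequence above. With these two fixes, and granting $\e D_n^2=O(\log n)$ (which does follow from the covariance estimates in the proof of Theorem~\ref{return3} together with $\e d_i=O(2^{-i/2})$), your argument goes through.

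A shorter route, closer to the paper's philosophy, bypasses $D_n$ altogether: apply the variance scheme (Theorem~1 of \cite{5}) directly to $T_n=\sum_{k\le n}k^{-1}1_{\{S_k=(\mathbf a\pm\epsilon)\sqrt k\}}$. The weights $1/k$ play the same role here that the sparsification hypothesis $n_{i+1}/n_i\ge1+Ai^{-\alpha}$ plays in Theorem~\ref{return3}, and the very same bounds on $\p(E_j\cap E_k)-\p(E_j)\p(E_k)$ derived in that proof yield $\sum_{j<k}(jk)^{-1}|\mathrm{cov}(1_{E_j},1_{E_k})|=O(\log n)$, hence $\mathrm{var}(T_n)=O(\log n)$, which is more than sufficient for condition~(\ref{rest}).
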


\subsection{Commentaries}
Even though we have restricted our attention to i.i.d. random vectors $X_i$, so that the random walk $S_n=\sum_{i=1}^n X_i$ is in the domain of attraction of a multivariate
gaussian law, it is clear from the proofs that we can apply our method to more general random walks. If we only consider independent (or even \emph{weakly dependent}) random vectors $X_i$ such that the corresponding random walk $S_n$ is in the domain of attraction of some stable law, and we have an appropriate (local) limit theorem with rates of convergence for this law (see \cite{7} and references therein for these kind of results), then our method can apply.    

\section{Proofs} 

\begin{proof}[Proof of Theorem \ref{return1}] The idea of the proof is the same as the one used in the proof of Theorem 2 of \cite{5} but with additional difficulties.\text{ }\\
\emph{The simple random walk.}
We treat this particular case separately because its proof is elementary. 
Then we say how it extends easily to lattice random walks by using a local central limit theorem with rates.

In this particular case, $S_n={\bf{a}} \sqrt{n}$ means $S_n=(2[a_1\sqrt{n}/2], 2[a_2\sqrt{n}/2])$. Note that if $T_n^1$ and $T_n^2$ are independent simple random walks in the line, then the simple random walk in the plane $S_n$ is obtainned by rotating
$(T_n^1, T_n^2)$ 45 degrees and then dividing by $\sqrt{2}$. So, for even $n$,
\[
    \p(S_n={\bf{a}} \sqrt{n})= \p(T_n^1=(a_1+a_2) \sqrt{n}\pm2) \p(T_n^2=(-a_1+a_2) \sqrt{n}\pm2).
\]
By using Stirling's formula
\[  
        n!=\sqrt{2\pi n} \left(\frac{n}{e}\right)^n \left(1+O\left(\frac{1}{n}\right)\right)
\]
where $O(n^{-1})>0$, together with the simple inequalities
\[ 
      (1+k/m)^m\le e^k \le (1+k/m)^m (1+k^2/m),
\]
\[
     (1+k/m)^m \le (1-k/m)^{-m}\le (1+k/m)^m  (1-k^2/m)^{-1},
\]
where $k\ge0,\,m> k^2$ and, for the second line of inequalities, we also assume $m\ge 1,\,m> k$, we get the following local central limit theorem with rates:
\begin{align}
&\p(T_{n}=a\sqrt{n})=\binom{n}{\left(n+2[a\sqrt{n}/2]\right)/2} 2^{-n}\notag  \\
&= \sqrt{\frac{2}{\pi}} \,n^{-1/2} \, e^{-a^2/2} \left(1+O\left(\frac{|a|^3}{\sqrt{n}}+\frac{1}{n}\right)\right), \label{localclt1}
\end{align}
for even $n>64 a_+^6$, where $a_+=\max\{1,a\}$ and the constants involved in $O(\cdot)$ do not depend on $a$. Note that (\ref{localclt1}) remains unchanged if we use 
$a\sqrt{n}+O(1)$ instead of $a\sqrt{n}$.
Then
\begin{align}
\p(S_{n}={\bf{a}}\sqrt{n})= \frac{2}{\pi} \,n^{-1} \, e^{-|{\bf{a}}|^2} \left(1+O\left(\frac{|{\bf{a}}|^3}{\sqrt{n}}+\frac{1}{n}\right)\right), \label{localclt2}
\end{align}
for even $n>512 |{\bf{a}}|_+^6$, and also (\ref{localclt2}) remains unchanged if we use ${\bf{a}}\sqrt{n}+\vec{O}(1)$ instead of ${\bf{a}}\sqrt{n}$, 
where $\vec{O}(1)=(O(1), O(1))$.

Let  $\tilde{S}_n=\sum_{i=1}^n 1_{\{S_{n_i}={\bf{a}}\sqrt{n_i}\}}$
where $n_i=2[i\log i]$. By Theorem 1 of \cite{5}, since  ${\e}\tilde{S}_n \to \infty$, if we prove that
\begin{equation}\label{rest}
      \mathrm{var}(\tilde{S}_n)= O\left(\frac{({\e}\tilde{S}_n)^2}{(\log {\e}\tilde{S}_n) (\log\log {\e}\tilde{S}_n)^\gamma}\right)
\end{equation}
for some $\gamma>1$, then      
\[      
      \frac{\tilde{S}_n}{{\e}\tilde{S}_n}\to 1  \quad \text{a.s.}
\]
which gives our result. Note that the random variables $1_{\{S_{i}={\bf{a}}\sqrt{i}\}}$ are not independent but we are going to prove that 
$Z_{i}=1_{E_i}$, where $E_i={\{S_{n_i}={\bf{a}}\sqrt{n_i}\}}$, are \emph{quasi-independent} in the sense of (\ref{rest}).
      
To prove condition (\ref{rest}) we have $\sum_{i=1}^n \mathrm{var}( Z_i)\le{\e}\tilde{S}_n$ and, for $i<j$,
\begin{align*}
     \p(E_i\cap E_j)&=\p(S_{n_i}={\bf{a}}\sqrt{n_i})\p(S_{n_j-n_i}={\bf{a}}(\sqrt{n_j}-\sqrt{n_i})+\vec{O}(1))\\
 &=\p(E_i)\p(E_j) \, \frac{n_j}{n_j-n_i}\, R,
\end{align*}
where
\[
    R= e^{2|{\bf{a}}|^2 \frac{\sqrt{n_i}}{\sqrt{n_j}+\sqrt{n_i}}} \left(1+O\left(\frac{1}{n_j-n_i}+\frac{{|\bf{a}}|^3}{\sqrt{n_j-n_i}}\right)\right)
\]
is obtainned using (\ref{localclt2}), valid for $n_j-n_i>C_1|{\bf{a}}|_+^6$, and $C_1, C_2,...$ denote absolut constants 
(which do not depend on ${\bf{a}}$). Note that $n_j-n_i\ge n_{i+1}-n_i>\log i>C_1|{\bf{a}}|_+^6$ for $i>e^{C_1|{\bf{a}}|_+^6}$.
To estimate
\begin{equation}\label{erdos11}
\Bigl|\sum_{1\le i<j\le n}  \p(E_i\cap E_j)-\p(E_i)\p(E_j)\Bigr|
\end{equation}
we separate the sum into two cases. Let 
\[
       \sqrt{\nu}_n=\log \log n > (\log {\e}\tilde{S}_n)^2. 
\]
In all cases we restrict to $i>e^{C_1|{\bf{a}}|_+^6}$ and $\nu_n\ge 2C_1|{\bf{a}}|_+^6$.

\emph{Case 1}: $n_j>\nu_n n_i$\\
Then we see that 
\[  
    \frac{n_j}{n_j-n_i}\le 1+\frac{2}{\nu_n} \;\text{ and }\; R= 1+O\left(\frac{|{\bf{a}}|^2}{\sqrt{\nu_n}}+\frac{1}{n_j-n_i}+\frac{|{\textbf{a}}|^3}{\sqrt{n_j-n_i}}\right).
\]
Since $\sum_{i=1}^\infty \p(E_i)/\sqrt{n_i}\le C_2<\infty$, this implies this case contribution of (\ref{erdos11}) is less than some constant $C_3$ times
\[
 \frac{({\e}\tilde{S}_n)^2}{\nu_n} + |{\bf{a}}|^2\frac{({\e}\tilde{S}_n)^2}{\sqrt{\nu_n}} + \frac{{\e}\tilde{S}_n}{\nu_n} + 
 |{\bf{a}}|^3 \frac{{\e}\tilde{S}_n}{\sqrt{\nu_n}}.
\]

\emph{Case 2}: $n_j\le\nu_n n_i$\\
Clearly this case contribution of (\ref{erdos11}) is less than
\begin{align}
    &C_4 e^{|{\bf{a}}|^2} \sum_{i,j}\p(E_i)\p(E_j) \frac{n_j}{n_j-n_i} \notag \\
    &\le C_5 \sum_{i=1}^n \p(E_i) \sum_j (n_j-n_i)^{-1}\label{erdos2}
\end{align}
Given $i$, let $N$ be the number of $j$'s satisfying $n_i\le n_j \le \nu_n n_i$. Then $n_{i+N}\le \nu_n n_i$ and $N+i\le \nu_n i $. Also $n_j-n_i>\log i (j-i)$. Then 
\begin{align*}
    \sum_{j=i+1}^{N+i}(n_j-n_i)^{-1}\le \frac{\log \nu_n}{\log i}+1\end{align*}
and  (\ref{erdos2}) is less than some constant $C_6$ times
\[
        {\e}\tilde{S}_n + \log \nu_n=O\left(({\e}\tilde{S}_n)^2/\sqrt{\nu_n}\right), 
\]
which ends the proof.

\emph{Lattice random walks.} 
Since ${\e}|X_i|^3<\infty$, we have the following local central limit theorem with rates (see \cite{2}):
\begin{equation}\label{lclt1}
   \p(S_n={\bf{a}}\sqrt{n})= n^{-1}\frac{1}{2\pi\sqrt{|\Sigma|}}\, e^{-\frac{1}{2}{\bf{a}}^T\Sigma^{-1}{\bf{a}}} \left(1+ O\left(\frac{1}{\sqrt{n}}\right)\right),
\end{equation}
where the constants involved in $O(\cdot)$) might depend (continuously) on ${\bf{a}}, |\Sigma|$ and on the distribution of $X_i$.
Then the proof is similar to the simple random walk.

The uniform convergence in probability follows from Chebyshev's inequality
\begin{equation}\label{cheb}
 \p(|S_n-{\e}S_n|>\epsilon{\e}S_n)\le\mathrm{var}(S_n)/(\epsilon{\e}S_n)^2\le C \epsilon^{-2} (\nu_n)^{-1/2},
\end{equation}
and the fact that $C$ can be chosen uniformly in ${\bf{a}}\in [-N,N]^2$.
\end{proof}

\begin{proof}[Proof of Theorem \ref{return2}]
By the Berry-Esseen Theorem and its multidimensional version (see \cite {4} and \cite{1}),
\begin{equation}\label{beress}
 \left| \p\left(\frac{S_n}{\sqrt{n}}={\bf{a}}\pm\epsilon \right) -  \frac{1}{\sqrt{(2\pi)^{d}}}\,\int_{{\bf{a}}\,\pm\,\epsilon} e^{-|{\bf{x}}|^2/2}\,d{\bf{x}}  \right|\le C\rho/ \sqrt{n},
\end{equation}
where $\rho={\e}|X_i|^3$ and $C$ is an absolute constant. We will interpret it as
\begin{equation}\label{lclt3}
   \p(S_n={\bf{a}}\sqrt{n}\pm n^\alpha)=(2/\pi)^{d/2}\,n^{d(\alpha-1/2)} \varphi_n({\bf{a}}) \left(1+ O(n^{-1/2 - d(\alpha-1/2)})\right),
\end{equation}
where
\[
      \varphi_n({\bf{a}})=2^{-d}\, n^{-d(\alpha-1/2)}\, \int_{{\bf{a}}\,\pm \,n^{\alpha-1/2}}  e^{-|{\bf{x}}|^2/2}\,d{\bf{x}}.
\]   
Hereafter, $C_1, C_2,...$ will denote absolute constants that might depend (continuously) on ${\bf{a}}$ and $\rho$ (this includes the constants involved in
 $O(\cdot)$).
 
Let $E_i$ denote the event $S_{n_i}={\bf{a}}\sqrt{n_i}\,\pm\, n_i^{\alpha}$ and consider the random variables $Z_i=1_{E_i}$ and
$\tilde{S}_n=\sum_{i=1}^n Z_i$. As before, we want to apply Theorem 1 of \cite{4}, so we must prove (\ref{rest}).
Clearly, (\ref{lclt3}) implies ${\e}\tilde{S}_n\sim  (2/ \pi)^{d/2} \,e^{-|{\bf{a}}|^2/2}\,(\log_d n)$, where $\log_1n=\log n$ and $\log_2 n = \log \log n$. 
Also $\sum_{i=1}^n \mathrm{var}( \tilde{X}_i)\le{\e}\tilde{S}_n$ and, for $i<j$,
$\p(E_i\cap E_j)$ is less than or equal to
\begin{align*}
     &\p(S_{n_i}={\bf{a}}\sqrt{n_i}\,\pm\, n_i^\alpha)\p(S_{n_j-n_i}={\bf{a}}(\sqrt{n_j}-\sqrt{n_i})\,\pm\, (n_j^\alpha+n_i^\alpha))\\
     &=\p(E_i)\p(E_j) R_1,
\end{align*}
and greater than or equal to
\begin{align*}
     &\p(S_{n_i}={\bf{a}}\sqrt{n_i}\,\pm\, n_i^\alpha)\p(S_{n_j-n_i}={\bf{a}}(\sqrt{n_j}-\sqrt{n_i})\,\pm\, (n_j^\alpha-n_i^\alpha))\\
     &=\p(E_i)\p(E_j) R_2.
\end{align*}
Using the real Taylor expansion 
\[
\int_{\tilde{a}-x}^{\tilde{a}+x} e^{-s^2/2}\, ds = 2x e^{-\tilde{a}^2/2} + O(x^2),
\]
valid for all $\tilde{a}, x\in\mathbb{R}$, with this $O(\cdot)$ independent of $\tilde{a}$, we also get, by Fubini theorem,
\[
\int_{\tilde{\bf{a}}\pm x}e^{-|{\bf{s}}|^2/2}\, d{\bf{s}} = 2^d x^d e^{-|\tilde{\bf{a}}|^2/2}\left(1 + \sum_{k=1}^d O(x^k)\right),
\]
for all $x\in\mathbb{R}$ and $\tilde{\bf{a}}$ in a bounded domain. Using this together with (\ref{beress}) we get
\begin{multline*}
\p \left( \frac{S_{n_j-n_i}} {\sqrt{n_j-n_i}} =\tilde{\bf{a}}\pm x\right)=(2/\pi)^{d/2} x^d  e^{-|\tilde{\bf{a}}|^2/2} \\
  \left(1 + \sum_{k=1}^d O(x^k)+O(x^{-d} (n_j-n_i)^{-1/2})\right), 
\end{multline*}
for all $x\in\mathbb{R} - 0$ and $\tilde{\bf{a}}$ in a bounded domain. This and (\ref{lclt3}) gives us the estimates 
\begin{multline*}
    R_\mu = e^{\frac{|{\bf{a}}|^2\sqrt{n_i}}{\sqrt{n_j}+\sqrt{n_i}}} \left(\frac{n_j}{n_j-n_i}\right)^{d/2} \Bigl(1+O\Bigl(n_j^{\alpha-1/2} + n_j^{-1/2-d(\alpha-1/2)}+ 
    \Bigl(\frac{n_i}{n_j}\Bigr)^\alpha \\ +\sum_{k=1}^d \left(\frac{n_j^\alpha}{\sqrt{n_j-n_i}}\right)^k+\frac{(n_j-n_i)^{(d-1)/2}}{(n_j^\alpha-n_i^\alpha)^{d}} \Bigr) \Bigr).
\end{multline*}
To estimate
\begin{equation}\label{erdos11b}
\Bigl|\sum_{1\le i<j\le n}  \left(\p(E_i\cap E_j)-\p(E_i)\p(E_j)\right)\Bigr|
\end{equation}
we separate the sum in two cases. Let 
\begin{equation*}
\nu_n=\begin{cases} (\log n)^{(5\alpha/2+1/4)^{-1}} &\text{ if } d=1\\ (\log \log n)^{1/5\alpha} &\text{ if } d=2. \end{cases}
\end{equation*}
In any case  $\nu_n^\alpha> (\log {\e}\tilde{S}_n)^2$.\\
\emph{Case 1}: $n_j>\nu_n n_i$\\
Then $R_k=1+O(\nu_n^{-\alpha}+n_j^{\alpha-1/2}+ n_j^{-1/2-d(\alpha-1/2)})$. It is easy to see that both 
\[
     \sum_{1\le i<j\le n} \p(E_i)\p(E_j) n_j^{\alpha-1/2}, \quad \sum_{1\le i<j\le n} \p(E_i)\p(E_j) n_j^{-1/2-d(\alpha-1/2)}
\]
are  $O({\e}\tilde{S}_n)$ because $(1-d^{-1})/2<\alpha<1/2$. So this case contribution of (\ref{erdos11b}) is $O(({\e}\tilde{S}_n)^2/\nu_n^\alpha)$.

\emph{Case 2}: $n_j\le\nu_n n_i$\\
This case contribution of (\ref{erdos11b}) is less than $C_1$ times $\sum_{k=1}^d I_k+II$ where
\begin{align*}
    I_k &= \sum_{i,j}\p(E_i)\p(E_j) \left(\frac{n_j}{n_j-n_i}\right)^{d/2} \left( 1+ O\left( \frac{n_j^\alpha}{\sqrt{n_j-n_i}}\right)^k\right),\\
    II &= \sum_{i,j}\p(E_i)\p(E_j) \left(\frac{n_j}{n_j-n_i}\right)^{d/2} \left( 1+ O\left(\frac{(n_j-n_i)^{(d-1)/2}}{(n_j^\alpha-n_i^\alpha)^{d}} \right)\right).
\end{align*}
Given $i$, let $N$ be the number of $j$'s satisfying $n_i\le n_j \le \nu_n n_i$. Then $n_{i+N}\le \nu_n n_i$ and $N+i\le \nu_n^{\beta^{-1}} i $.
We have that
\begin{equation}\label{crit}
\frac{n_j^\alpha}{\sqrt{n_j-n_i}}\le C_2 \nu_n^{\alpha}\, i^{\beta(\alpha-1/2)+1/2}\,(j-i)^{-1/2},
\end{equation}
where $\beta(\alpha-1/2)+1/2\le0$.\\
\emph{Case 2.1}: $d=1$\\
Then
\begin{align*}
    I_1&\le C_3 \nu_n^\alpha \sum_{i,j}\p(E_i)\p(E_j) \left(\frac{n_j}{n_j-n_i}\right)^{1/2}
    \le C_4\nu_n^\alpha \sum_{i,j}\p(E_i) \frac{n_j^\alpha}{\sqrt{n_j-n_i}}\\
    &\le C_4 \nu_n^{2\alpha}\sum_{i,j}\p(E_i) i^{\beta(\alpha-1/2)+1/2}\, (j-i)^{-1/2}\\
     &\le C_5 \nu_n^{2\alpha+\beta^{-1}/2}\sum_{i}\p(E_i) i^{\beta(\alpha-1/2)+1}.
\end{align*}
Since $\beta(\alpha-1/2)+1=0$, we get $I\le C_5 \nu_n^{3\alpha/2+1/4}\, {\e}\tilde{S}_n=O(({\e}\tilde{S}_n)^2/\nu_n^\alpha)$. If 
$1-\alpha\beta\le0\Leftrightarrow \alpha\ge1/4$ then
\begin{equation*}
         (n_j^\alpha-n_i^\alpha)^{-1}\le C_6 i^{1-\alpha\beta} (j-i)^{-1},
\end{equation*}
and
\begin{align*}
    II\le C_7  \sum_{i,j}\p(E_i)\p(E_j) \left(\frac{n_j}{n_j-n_i}\right)^{1/2},
\end{align*}
which we already have seen to be $O(({\e}\tilde{S}_n)^2/\nu_n^\alpha)$. If $1/6\le\alpha<1/4$ then
\begin{equation*}\label{crit2}
         (n_j^\alpha-n_i^\alpha)^{-1}\le C_8 \nu_n^{\beta^{-1}-\alpha}\, i^{1-\alpha\beta} (j-i)^{-1},
\end{equation*}
and
\begin{align*}
    II&\le   \sum_{i,j}\p(E_i)\p(E_j) \left(\frac{n_j}{n_j-n_i}\right)^{1/2}  \left(1 + C_9 \nu_n^{\beta^{-1}-\alpha}  i^{1-\alpha\beta} (j-i)^{-1}\right)\\
     &\le O(({\e}\tilde{S}_n)^2/{\nu_n}^\alpha) +  C_{10}\nu_n^{\beta^{-1}-\alpha} \sum_{i,j}\p(E_i) \frac{n_j^\alpha}{\sqrt{n_j-n_i}} \,i^{1-\alpha\beta}\,(j-i)^{-1}\\
    &\le  O(({\e}\tilde{S}_n)^2/{\nu_n}^\alpha) + C_{11} \nu_n^{\beta^{-1}}\sum_{i,j}\p(E_i) i^{-\beta/2+3/2}\, (j-i)^{-3/2},
\end{align*}
where we used (\ref{crit}). Now $\alpha\ge 1/6\Leftrightarrow\beta\ge 3 $, so $II=O(({\e}\tilde{S}_n)^2/{\nu_n}^\alpha)$.\\
\emph{Case 2.2}: $d=2$\\
Now
\begin{equation*}\label{crit2}
\frac{n_j^\alpha}{\sqrt{n_j-n_i}}\le C_{12} \nu_n^{\alpha}\, (\log i)^{\beta(\alpha-1/2)}\,(j-i)^{-1/2}.
\end{equation*}
Then
\begin{align*}
    I_1&\le C_{12} \nu_n^\alpha \sum_{i,j}\p(E_i)\p(E_j)\frac{n_j}{n_j-n_i}
    \le C_{13}\nu_n^\alpha \sum_{i,j}\p(E_i) \frac{n_j^{2\alpha}}{n_j-n_i}\\
    &\le C_{14} \nu_n^{3\alpha}\sum_{i,j}\p(E_i) (\log i)^{-1}\, (j-i)^{-1}\le C_{14} \nu_n^{3\alpha}\left( \log \nu_n + {\e}\tilde{S}_n\right),
\end{align*}
which is $O(({\e}\tilde{S}_n)^2/\nu_n^\alpha)$. The same computation shows that $I_2=O(({\e}\tilde{S}_n)^2/\nu_n^\alpha)$.
Now
\[
       \frac{\sqrt{n_j-n_i}}{(n_j^\alpha-n_i^\alpha)^2}\le C_{15} \nu_n^{2\alpha}\, i^{\beta/2-2\alpha\beta+3/2}\, (\log i)^{\beta(1/2-2\alpha)}\, (j-i)^{-3/2}.
\]
Note that $\beta/2-2\alpha\beta+3/2\le0\Leftrightarrow \alpha\ge 2/5$ and $1/2-2\alpha<0$. Then, proceeding as for $I_1$, we get 
$II=O(({\e}\tilde{S}_n)^2/\nu_n^\alpha)$.
 
So Case 2 contribution of (\ref{erdos11b}) is also $O(({\e}\tilde{S}_n)^2/\nu_n^\alpha)$.

The uniform convergence in probability follows from the same arguments used in (\ref{cheb}).

\end{proof}

\begin{proof}[Proof of Theorem \ref{return3}]
By the Berry-Esseen Theorem and its multidimensional version (see \cite {4} and \cite{1}) we have
\begin{equation}\label{beress2}
 \p\left(\frac{S_n}{\sqrt{n}}={\bf{a}}\pm \epsilon \right)=  (2\pi)^{-d/2} \int_{{\bf{a}}\pm\epsilon}  e^{-\frac{1}{2}{\bf{x}}^T\Sigma^{-1}{\bf{x}}}\,d{\bf{x}} 
 \left(1+O\left(\frac{1}{\sqrt{n}}\right)\right).
\end{equation}
Hereafter, $C_1, C_2, ...$ will denote absolute constants that might depend (continuously) on ${\bf{a}}$, $\rho={\e}|X_i|^3$, $d$ and the eigenvalues of $\Sigma$ (this includes the constants involved in $O(\cdot)$).
 
Let $E_i$ denote the event $S_{n_i}=({\bf{a}}\pm\epsilon)\sqrt{n_i}$ and consider the random variables $Z_i=1_{E_i}$ and
$\tilde{S}_n=\sum_{i=1}^n Z_i$. As before, we want to apply Theorem 1 of \cite{3}, so we must prove (\ref{rest}).
Clearly (\ref{beress}) implies ${\e}\tilde{S}_n\,\sim n (2\pi)^{-d/2} \int_{{\bf{a}}\pm\epsilon}  e^{-\frac{1}{2}{\bf{x}}^T\Sigma^{-1}{\bf{x}}}\,d{\bf{x}} \to\infty$. Also 
$\sum_{i=1}^n \mathrm{var}( \tilde{X}_i)\le{\e}\tilde{S}_n$ and, for $i<j$,
$\p(E_i\cap E_j)$ is less than or equal to
\begin{align*}
     &\p(S_{n_i}=({\bf{a}}\pm\epsilon)\sqrt{n_i})\p(S_{n_j-n_i}={\bf{a}}(\sqrt{n_j}-\sqrt{n_i})\pm \epsilon(\sqrt{n_j}+\sqrt{n_i}))\\
     &=\p(E_i)\p(E_j) R_1,
\end{align*}
and greater than or equal to
\begin{align*}
     &\p(S_{n_i}=({\bf{a}}\pm\epsilon)\sqrt{n_i})\p(S_{n_j-n_i}=({\bf{a}}\pm\epsilon)(\sqrt{n_j}-\sqrt{n_i}))\\
     &=\p(E_i)\p(E_j) R_2.
\end{align*}
To estimate $R_k$ we use (\ref{beress2}) and then, by separating an integral over a cube into several integrals over appropriate boxes, and then using the mean value theorem, we get
\begin{align*}
    R_k= 1+ O \left( \left( 1- \frac{\sqrt{n_j}\pm\sqrt{n_i}}{\sqrt {n_j-n_i}} \right)  \left(\frac{\sqrt{n_j}}{\sqrt {n_j-n_i}} \right)^{d-1} + \frac{1}{\sqrt{n_j-n_i}}\right). 
\end{align*}
To estimate
\begin{equation}\label{erdos11c}
\Bigl|\sum_{1\le i<j\le n}  \p(E_i\cap E_j)-\p(E_i)\p(E_j)\Bigr|
\end{equation}
we separate the sum in two cases. Let 
\[
      \nu_n=\frac{n^{1-\alpha(d/2-1)}}{\log n} > (\log {\e}\tilde{S}_n)^2.
\]
\emph{Case 1}: $n_j>\nu_n n_i$\\
Then $R_k=1+O(\nu_n^{-1})$ and this case contribution of (\ref{erdos11c}) is $O(({\e}\tilde{S}_n)^2/\nu_n^\alpha)$.

\emph{Case 2}: $n_j\le\nu_n n_i$\\
Clearly this case contribution of (\ref{erdos11c}) is less than
\begin{align}\label{as2}
    C_1\sum_{i,j}\p(E_i)\p(E_j) \left(\frac{\sqrt{n_j}}{\sqrt {n_j-n_i}} \right)^{d}.
\end{align}
The hypothesis on the sequence $n_i$ implies
\[
    \left(\frac{\sqrt{n_j}}{\sqrt {n_j-n_i}} \right)^{d}\le C_2 i^{d\alpha/2}.
\]
Given $i$, let $N$ be the number of $j$'s satisfying $n_i\le n_j \le \nu_n n_i$. Then $n_{i+N}\le \nu_n n_i$ and using the hypothesis on the sequece $n_i$ we get
\begin{equation*}
 \sum_{k=i}^{N+i+1} i^{-\alpha}\le C_3 \log \nu_n,
\end{equation*}
and simple calculus shows that $N\le C_4 i^{-\alpha}\log \nu_n$ (here we used $0<\alpha<1$). Then (\ref{as2}) becomes less than $C_5 \log \nu_n \,n^{1+\alpha(d/2-1)}$
which is $O(({\e}\tilde{S}_n)^2/\nu_n)$.

The uniform convergence in probability follows from the same arguments used in (\ref{cheb}).
\end{proof}


\begin{thebibliography}{xx}

\bibitem[1]{1} 
  H. Bergstr\"om, \emph{On the central limit theorem in $R_k, k>1$}, Skand. Aktuarietidskr. \textbf{28} (1945), 106-127.
\bibitem[2]{2}
  R. Bhattacharya, R. Rao. \emph{Normal Approximation and Asymptotic Expansions}, Wiley, 1976.
\bibitem[3]{3}
M. Boshernitzan. Quantitative recurrence results, \emph{Invent. Math.} \textbf{113} (1993), 617-631.
\bibitem[4]{4}
  R. Durrett. \emph{Probability: Theory and Examples}, Second Edition, Duxbury Press, 2004. 
\bibitem[5]{5}
 N. Luzia, \emph{A Borel-Cantelli lemma and its applications}, Trans. Amer. Math. Soc.  \textbf{366} (2014), 547-560.
\bibitem[6]{6}
  G. P\'olya, \emph{\"Uber eine Aufgabe der Wahrscheinlichkeitsrechnung betreffend die Irrfahrt im Stra§ennetz}, Math. Ann. \textbf{84} (1921), no. 1-2, 149-160.
\bibitem[7]{7}
 C. Stone, \emph{A local limit theorem for nonlattice multi-dimensional distribution functions}, Ann. Math. Statist. \textbf{36} (1965), 369-745.
\end{thebibliography}
\end{document}